\newtheorem{thm}{Theorem}
\newtheorem{conj}[thm]{Conjecture}
\newtheorem{lem}[thm]{Lemma}
\newtheorem{prop}[thm]{Proposition}
\def\cC{{\mathcal C}}
\def\bR{{\mathbb R}}
\def\bZ{{\mathbb Z}}
\def\del{{\partial}}
\def\int{{\text{int}}}
\begin{document}

\title[On curves intersecting at most once, II]%
{On curves intersecting at most once, II}

\author[Joshua Evan Greene]{Joshua Evan Greene}

\address{Department of Mathematics, Boston College\\ Chestnut Hill, MA 02467}

\email{joshua.greene@bc.edu}

\maketitle

\medskip

\noindent {\bf Abstract.}
We prove that on a closed, orientable surface of genus $g$, a set of simple loops with the property that no two are homotopic or intersect in more than $k$ points has cardinality $\lesssim_k g^{k+1} \log g$.
The bound matches the size of the largest known construction to within a factor of $\sim_k \log g$.
It generalizes an earlier result of the author, which treated the case $k=1$.
The proof blends probabilistic ideas with covering space arguments related to the fact that surface groups are LERF.
\vspace{.1in}


\section{Introduction.}

Let $S$ denote a connected, orientable surface of finite type and Euler characteristic $\chi < 0$, and let $k$ denote a non-negative integer.
A {\em $k$-system} on $S$ is a set of simple loops on $S$ in which no two are homotopic or intersect in more than $k$ points.
The problem at hand is to estimate the maximum cardinality of a $k$-system on $S$.
It first originated in the extremal combinatorics literature in the work of Juvan, Malnic, and Mohar \cite{jmm96}.
It independently resurfaced in the low-dimensional topology community after Farb and Leininger raised it for the case $k=1$.

Przytycki addressed the corresponding problem for simple arcs using geometric methods.
He showed that the maximum cardinality of a set of proper simple arcs on $S$ in which no two are homotopic or intersect in more than $k$ points is $\sim_k |\chi|^{k+1}$, and he determined the precise value in the case $k=1$ \cite[Theorems 1 and 5]{przytycki2015}.\footnote{For functions $f$ and $g$ of several variables including $k$, we write $f \lesssim_k g$ to mean that there exists a function $C$ of $k$ alone that satisfies the inequality $f \le C \cdot g$ for all values of the variables.
We write $f \sim_k g$ if $f \lesssim_k g$ and $g \lesssim_k f$.
Similarly, we write $f \lesssim g$ and $f \sim g$ if $C$ can be taken to be an absolute constant.
}
Conjecturally, the same estimate holds for simple loops on $S=S_g$, the closed surface of genus $g$, for which $g \sim \chi$:
\begin{conj}
\label{conj: loops}
The maximum cardinality of a $k$-system of simple loops on $S_g$ is $\sim_k g^{k+1}$.
\end{conj}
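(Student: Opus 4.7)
The plan is to establish both matching bounds in Conjecture \ref{conj: loops}. The lower bound, namely constructing a $k$-system of size $\gtrsim_k g^{k+1}$, is already known from the prior constructions referenced in the abstract, so the essential task is to sharpen the upper bound from $g^{k+1} \log g$ to $g^{k+1}$.

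For the upper bound, I would follow the same overall framework suggested by the paper---combining probabilistic ideas with covering-space arguments based on the fact that surface groups are LERF---and reduce the loop problem to Przytycki's sharp arc bound of $\lesssim_k |\chi|^{k+1}$ on a carefully chosen auxiliary cover. The logarithmic loss in the paper's estimate almost certainly enters through a union bound over the elements of a putative $k$-system $\Gamma$, or equivalently through a concentration step requiring $\Omega(\log |\Gamma|)$ independent trials. My strategy for removing this loss is to replace the per-loop concentration step with a single global second-moment (or higher-moment) computation: pick one random finite cover $\tilde{S} \to S_g$ of appropriate degree from a LERF-compatible distribution, lift the loops in $\Gamma$, and match a lower estimate for $\bE|\tilde{\Gamma}|$ against the deterministic upper bound that Przytycki's arc theorem yields on $\tilde{S}$, after cutting along the preimage of a fixed filling pair of reference curves on $S_g$.

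Executing this plan requires controlling the variance (or higher moments) of $|\tilde{\Gamma}|$ directly, rather than applying a concentration inequality loop by loop. One route is to exploit near-independence of distinct loops' lifting statistics in random covers drawn from a suitable LERF-compatible distribution; another is an entropy/compression argument, encoding each loop by its intersection pattern with a bounded-complexity filling system and bounding the number of such patterns realizable inside a $k$-system.

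The hardest step is precisely the removal of the logarithmic factor. The paper gains considerable mileage from LERF but pays a $\log g$ cost to handle the many loops of $\Gamma$ in parallel; eliminating this cost seems to demand genuinely new input---either a finer structural theorem about how the loops in a $k$-system can simultaneously intersect a fixed filling system, or a global invariant of the cover (perhaps an Euler-characteristic or spectral quantity) that captures $|\Gamma|$ without passing through per-loop union bounds. I expect this to be where essentially all of the new effort must go, and it is plausible that resolving the full conjecture will require ideas beyond the current probabilistic-plus-LERF toolkit.
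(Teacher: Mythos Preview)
The statement you are attempting to prove is labeled a \emph{Conjecture} in the paper, and the paper does not prove it. Immediately after stating it, the author writes that ``Conjecture \ref{conj: loops} is not known to hold for a single value $k \ge 1$,'' and the main results of the paper are the weaker upper bound $|\Gamma| \lesssim_k g^{k+1}\log g$ (Theorem \ref{thm: second bound}) together with the construction of size $\sim_k g^{k+1}$ (Proposition \ref{prop: construction}). So there is no ``paper's own proof'' to compare against; the $\log g$ gap is precisely what the paper leaves open.

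Your proposal is not a proof but a research outline, and you essentially acknowledge this in your final paragraph. The concrete suggestions you make---a second-moment computation on $|\widetilde\Gamma|$ over a random cover, or an entropy/compression argument on intersection patterns---are plausible directions, but you supply no mechanism for either. In particular, your diagnosis of where the $\log g$ enters is slightly off: in the paper it comes not from a per-loop union bound over covers, but from invoking \cite[Theorem 3]{greene:1-system} to find a subset $\Gamma_0 \subset \Gamma$ with $|U(\Gamma_0)| \gtrsim_k |\Gamma|/\log g$, to which Theorem \ref{thm: degree bound} is then applied once. Removing the logarithm would require either strengthening that extraction step or bypassing the $U(\Gamma_0)$ reduction entirely, and nothing in your proposal addresses this. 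As it stands, the proposal identifies the correct obstacle but does not overcome it.
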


\noindent
However, Conjecture \ref{conj: loops} is not known to hold for a single value $k \ge 1$.
It is perplexing that the case of loops has proven so much more challenging than the case of arcs!
Since Przytycki's work, all progress on upper bounds for loops has reduced in some way to the case of arcs, and this paper is no different.

In the way of upper bounds, the best previous result had been that a $k$-system $\Gamma$ of simple loops has  cardinality $\lesssim_k g^{3k-1} \log g$ \cite[Theorem 4]{greene:1-system}.
The proof of that bound utilized in part a theorem of Aougab, Biringer, and Gaster that each loop $\alpha \in \Gamma$ intersects $\lesssim_k g^{3k-1}$ other loops in $\Gamma$ \cite[Theorem 1.4]{abg2017}.
We improve both bounds here for all values $k > 1$.

\begin{thm}
\label{thm: third bound}
Suppose that $\Gamma$ is a $k$-system of simple loops on $S_g$ and $\alpha \in \Gamma$.
Then the set of loops in $\Gamma$ that intersect $\alpha$ has cardinality $\lesssim_k g^{k+1}$.
\end{thm}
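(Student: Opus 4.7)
The plan is to reduce the problem to counting arc classes on the cut surface $S'=S_g\setminus\alpha$ and apply Przytycki's arc theorem. The surface $S'$ has $|\chi(S')|\sim g$ and one or two boundary circles (depending on whether $\alpha$ is separating).

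Put each $\beta\in\Gamma_\alpha$ in minimal position with $\alpha$ and let $A(\beta)\subset S'$ denote the set of $j_\beta\in\{1,\dots,k\}$ arcs into which $\beta$ is cut. Let $A^*$ be the set of isotopy classes of arcs appearing across all $A(\beta)$. If $[a]\neq[b]$ are two classes of $A^*$, then either they have representatives belonging to distinct loops $\beta_1,\beta_2\in\Gamma_\alpha$, in which case any such representatives meet in at most $|\beta_1\cap\beta_2|\le k$ points, or they are distinct arcs of a single $\beta$, in which case they are disjoint. In either case $[a]$ and $[b]$ realize at most $k$ intersections in $S'$, so $A^*$ is a $k$-system of proper simple arcs on $S'$. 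Przytycki's theorem then yields
\[
|A^*|\;\lesssim_k\;|\chi(S')|^{k+1}\;\sim_k\;g^{k+1}.
\]

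With this in hand, the theorem would follow from
\[
|\Gamma_\alpha|\;\le\;\sum_{\beta\in\Gamma_\alpha} j_\beta\;=\;\sum_{[a]\in A^*}\#\bigl\{(\beta,i):[a_{\beta,i}]=[a]\bigr\}\;\lesssim_k\;|A^*|,
\]
i.e., from the bound that, on average, an arc class of $A^*$ occurs with multiplicity $O_k(1)$ in the multiset $\bigsqcup_\beta A(\beta)$. The first inequality uses $j_\beta\ge 1$, and the final one is the content to be proved.

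The main obstacle is precisely this multiplicity estimate: a priori, a single arc class could be shared by many loops of $\Gamma_\alpha$, and the combinatorial bound $|\Gamma_\alpha|\le\binom{|A^*|}{k}$ obtained from assembling loops out of arc classes is off by a factor of $g^{(k-1)(k+1)}$. This is where the probabilistic and covering-space ideas advertised in the abstract enter. The plan is to invoke LERF to pass to a finite cover $\widetilde S\to S_g$ in which the relevant lifts of $\alpha$ together with loops of $\Gamma_\alpha$ sharing a common arc class downstairs separate into distinguishable configurations, and then to pair this with a sampling argument (either choosing one arc per loop uniformly at random, or averaging over a family of random covers of bounded degree) so that in expectation the assignment of arcs to loops overcounts only by a factor depending on $k$. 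Chaining this multiplicity estimate with the Przytycki bound above then gives the asserted inequality $|\Gamma_\alpha|\lesssim_k g^{k+1}$.
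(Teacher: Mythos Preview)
Your proposal correctly identifies the reduction to Przytycki's arc bound as the endgame, and your verification that $A^*$ is a $k$-system of arcs is fine. But the proof has a genuine gap at exactly the point you flag: the multiplicity estimate
\[
\sum_{\beta\in\Gamma_\alpha} j_\beta \;\lesssim_k\; |A^*|
\]
is asserted, not proved. Your final paragraph is a description of ingredients (``invoke LERF \dots\ pair this with a sampling argument'') rather than an argument, and it is not clear how those ingredients would yield a bound on the \emph{average multiplicity of arc classes}. Indeed, nothing prevents a single arc class from appearing in many loops of $\Gamma_\alpha$: if $\beta_1,\dots,\beta_m$ each intersect $\alpha$ twice and all share one arc class while their second arcs vary, the multiplicity of that class is $m$, and there is no evident mechanism in your sketch that controls this.

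The paper does not attempt your multiplicity bound at all. Instead it proves directly (Lemma~\ref{lem: old}, via the Dehn-twist argument) that the loops meeting $\alpha$ \emph{exactly once} number $\lesssim_k g^{k+1}$; here each loop yields a single arc and at most $k+1$ loops give the same arc class, so no multiplicity issue arises. The covering and probabilistic work is then used to \emph{reduce the general case to this one}: Scott's theorem produces, for each pair $(\alpha,\beta)$, a bounded-degree cover in which their lifts meet exactly once (Lemma~\ref{lem: subsurface}); a parameter count organizes these into a fixed finite family $\mathcal C(S)$ of covers in which a proportion $r(k)$ of pairs lift well (Theorem~\ref{thm: lifting}); and averaging over $\mathcal C(S)$ gives a single cover $\widetilde S$ of degree $\le f(k)$ to which $\alpha$ and a proportion $\ge r(k)/f(k)$ of the loops in $\Gamma_\alpha$ lift to loops meeting $\widetilde\alpha$ once. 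Applying Lemma~\ref{lem: old} upstairs and dividing by $r(k)/f(k)$ finishes. So the LERF/probabilistic machinery is aimed at forcing $j_\beta=1$ in a cover, not at bounding how many loops share an arc class downstairs; your proposed route would require a genuinely new argument for that step.
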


A familiar argument originating in the proof of \cite[Theorem 1.4]{przytycki2015} establishes the weaker statement that the set of loops in $\Gamma$ that intersect $\alpha$ {\em exactly once} has cardinality $\lesssim_k g^{k+1}$ (Lemma \ref{lem: old}).
In order to promote Lemma \ref{lem: old} to Theorem \ref{thm: third bound}, we prove the existence of a covering $\widetilde S \to S$ to which $\alpha$ lifts to a loop $\widetilde \alpha$ and to which a positive proportion of the loops in $\Gamma$ intersecting $\alpha$ all lift to loops intersecting $\widetilde \alpha$ exactly once.
Moreover, the number of sheets in the covering and the proportion of the loops are functions of $k$ alone.
By applying Lemma \ref{lem: old} to the lifted loops on $\widetilde S$, we obtain the stated bound in Theorem \ref{thm: third bound} (albeit with a larger implied function $C(k)$).
The existence of the covering involves a probabilistic idea along with a covering space argument relying on a famous theorem of Scott \cite[Theorem 3.3]{scott:1978}.

A mild refinement of Theorem \ref{thm: third bound} and the probabilistic argument in \cite{greene:1-system} then lead to:

\begin{thm}
\label{thm: second bound}
Suppose that $\Gamma$ is a $k$-system of simple loops on $S_g$.
Then $|\Gamma| \lesssim_k g^{k+1} \log g$.
\end{thm}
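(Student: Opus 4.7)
Let $D(g) := C_k g^{k+1}$ denote the degree bound supplied by Theorem~\ref{thm: third bound}: every loop $\alpha\in\Gamma$ is met by at most $D(g)$ other loops of $\Gamma$. The plan is to feed this bound into the probabilistic machinery developed in \cite{greene:1-system}, which upgrades a per-loop intersection bound to a bound on the total cardinality of $\Gamma$ at the cost of an extra logarithmic factor.

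The first step is the ``mild refinement'' alluded to in the discussion: I would show that Theorem~\ref{thm: third bound} holds verbatim, with the same bound $D(g')$, for any $k$-system on an orientable finite-type subsurface of $S_g$ of genus $g'$, not merely for the closed surface. Since the covering argument underlying Theorem~\ref{thm: third bound} relies on Scott's LERF theorem applied internally to a hyperbolic surface, the extension to subsurfaces with boundary should be essentially formal: lift to a cover of the subsurface to which the relevant loop lifts as a loop and such that a positive proportion of its intersectors lift to loops meeting it exactly once, then apply the subsurface version of Lemma~\ref{lem: old}. This refinement is what allows the degree bound to be applied in a recursive/divide-and-conquer setting.

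Second, I would invoke the probabilistic argument of \cite{greene:1-system}. That argument takes as input a $k$-system $\Gamma$ on $S_g$ together with a per-loop intersection bound of the form ``each loop of $\Gamma$ meets at most $D$ others'' (valid on every subsurface), and outputs $|\Gamma|\lesssim_k D\log g$. Combined with the classical fact that any collection of pairwise disjoint, pairwise non-homotopic essential simple loops on a genus-$g'$ subsurface has cardinality $\lesssim g'$, the probabilistic argument recurses through a sequence of subsurfaces (cutting along cleverly chosen loops) so that the total contribution from the $D$-term across the recursion telescopes to $D\log g$. Substituting the improved bound $D = C_k g^{k+1}$ from Theorem~\ref{thm: third bound} into the conclusion of the probabilistic argument immediately gives $|\Gamma| \lesssim_k g^{k+1}\log g$.

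The main obstacle I anticipate is verifying that the probabilistic argument from \cite{greene:1-system}, originally calibrated to $1$-systems, ports to the $k$-system setting for $k\ge 2$. Its structural ingredients--the disjoint-loop bound and the recursive splitting of a surface into smaller complexity pieces--apply uniformly in $k$; the only $k$-dependent input is the degree bound itself, which the refinement in the first step supplies on all relevant subsurfaces. Provided the refinement is in hand and the probabilistic scheme is truly insensitive to the pairwise intersection multiplicity (as opposed to merely whether two loops intersect), the proof goes through without further modification.
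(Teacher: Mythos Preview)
Your high-level plan---feed the improved degree bound into the machinery of \cite{greene:1-system}---is correct, but you have misidentified the ``mild refinement'' and consequently misdescribed how the probabilistic argument is used.

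The refinement is not an extension of Theorem~\ref{thm: third bound} to subsurfaces with boundary. It is Theorem~\ref{thm: degree bound}: for \emph{any} subset $\Gamma_0 \subset \Gamma$, the set $U(\Gamma_0)$ of loops in $\Gamma$ that intersect a \emph{unique} loop of $\Gamma_0$ satisfies $|U(\Gamma_0)| \lesssim_k |\chi|^{k+1}$. Theorem~\ref{thm: third bound} is just the special case $|\Gamma_0|=1$. This is the form the degree bound must take in order to mesh with the input from \cite{greene:1-system}.

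The probabilistic result invoked from \cite{greene:1-system} is not a recursive cutting procedure that telescopes a per-loop degree bound. It is a single existence statement: in a $k$-system with no loop disjoint from the rest, there exists a subset $\Gamma_0 \subset \Gamma$ with $|U(\Gamma_0)| \gtrsim |\Gamma|/\log g$. (For $1$-systems one has $U(\Gamma_0)=A(\Gamma_0)$, so this distinction was invisible in \cite{greene:1-system}; for $k \ge 2$ it is essential, and is exactly why the refinement to $U(\Gamma_0)$ is needed.) Combining this with Theorem~\ref{thm: degree bound} gives $|\Gamma|/\log g \lesssim_k g^{k+1}$ in one stroke; the only induction is the trivial one handling a loop disjoint from all others.

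Your proposed refinement---a per-loop degree bound valid on all subsurfaces---would not by itself yield $|\Gamma| \lesssim_k D \log g$: a maximum-degree bound on the intersection graph does not control its number of vertices, and the ``telescoping through subsurfaces'' you describe is not how the argument in \cite{greene:1-system} actually runs. The missing idea is precisely the passage from a single loop $\alpha$ to the quantity $|U(\Gamma_0)|$ for an arbitrary $\Gamma_0$.
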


In the way of constructions, Aougab and, independently, Malestein, Rivin, and Theran described 1-systems of simple loops on $S_g$ of cardinality $\sim g^2$ \cite[Theorem 1.2]{aougab14}, \cite[Theorem 1]{mrt14}.
By adapting Przytycki's construction for arcs, Aougab, Biringer, and Gaster indicated the existence of $k$-systems of simple loops on $S_g$ of cardinality $\sim_k g^{k+1}$ for all even values $k$ \cite[Remark after Theorem 1.2]{abg2017}.
Drawing inspiration from this construction, we generalize it to all $k$ by hybridizing the constructions in \cite{mrt14,przytycki2015}:

\begin{prop}
\label{prop: construction}
There exists a $k$-system $\Gamma$ of simple loops on $S_g$ of cardinality $\sim_k g^{k+1}$.
Furthermore, $\Gamma$ contains a loop that intersects every other loop in $\Gamma$.
\end{prop}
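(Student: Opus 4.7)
The plan is to hybridize Przytycki's arc construction \cite{przytycki2015} with a gluing trick that converts arcs into loops, in the spirit of \cite{mrt14,abg2017}. Fix a non-separating simple closed curve $\alpha \subset S_g$ and let $\Sigma = S_g \setminus \nu(\alpha)$, a surface of genus $g-1$ with two boundary components $\partial_+$ and $\partial_-$; let $\phi\colon \partial_+ \to \partial_-$ denote the identification used to reassemble $S_g$ from $\Sigma$. The target is to construct $\sim_k g^{k+1}$ properly embedded arcs in $\Sigma$, each running from $\partial_+$ to $\partial_-$, pairwise non-homotopic, with pairwise intersection number at most $k$, and whose endpoint pairs can be matched by $\phi$ after a small boundary perturbation.

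\emph{Step 1 (arcs).} Apply Przytycki's construction \cite[Theorem 1]{przytycki2015} in $\Sigma$ to obtain $\sim_k |\chi(\Sigma)|^{k+1} \sim_k g^{k+1}$ pairwise non-homotopic proper arcs with pairwise intersection number at most $k$. By adapting the underlying ideal triangulation so that its vertices are appropriately distributed between $\partial_+$ and $\partial_-$, or by passing to a suitable subfamily of comparable size, arrange that each retained arc has one endpoint on $\partial_+$ and one on $\partial_-$; retain $\sim_k g^{k+1}$ arcs. A small isotopy of the arc endpoints along $\partial_\pm$, possible because the family is finite, then arranges each arc's two endpoints to be matched by $\phi$ without altering the interior intersection pattern.

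\emph{Step 2 (loops).} Regluing $\partial_+$ to $\partial_-$ via $\phi$ converts each arc into a simple loop on $S_g$ meeting $\alpha$ transversely at exactly one point. Two such loops intersect on $S_g$ precisely where the underlying arcs intersect on $\Sigma$---at most $k$ times---since the endpoint perturbation ensures their crossings of $\alpha$ are distinct. They are pairwise non-homotopic because their restrictions to $\Sigma$ are. Setting $\Gamma$ to be this family of loops together with $\alpha$, we obtain $|\Gamma| \sim_k g^{k+1}$; $\Gamma$ is a $k$-system since any two loops in $\Gamma \setminus \{\alpha\}$ meet at most $k$ times and $\alpha$ meets each of them exactly once (and $1 \le k$); and $\alpha$ is the promised loop intersecting every other loop in $\Gamma$, non-homotopic to all of them since $|i(\alpha,\gamma)|=1\neq 0$.

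\emph{Main obstacle.} The delicate step is Step 1: verifying that Przytycki's family can be arranged so that a constant (depending on $k$ only) fraction of the arcs run between $\partial_+$ and $\partial_-$. Przytycki's construction begins with an ideal triangulation of $\Sigma$ and considers arcs of bounded combinatorial slope within each triangle, so a symmetric placement of the triangulation's vertices on $\partial_+$ and $\partial_-$---or a refinement of the triangulation near $\alpha$---should force the desired fraction of arcs to be of the correct type. Checking this combinatorial adjustment while preserving both the arc count and the pairwise intersection bound is the principal technical hurdle in the plan.
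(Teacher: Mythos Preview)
Your plan is in the same spirit as the paper's argument for even $k$, but the sentence ``a small isotopy of the arc endpoints along $\partial_\pm$ \dots arranges each arc's two endpoints to be matched by $\phi$'' hides the real difficulty, and it is not the one you flagged. The problem is not whether a constant fraction of Przytycki's arcs run from $\partial_+$ to $\partial_-$; it is that for a given arc the point $\phi(p^+)\in\partial_-$ has no reason to be close to $p^-$, so the required slide is macroscopic, not small. Equivalently, even if you place one ideal vertex on each boundary component and choose $\phi$ to match them, all of the arcs then share a single crossing point on $\alpha$, and separating those crossings amounts to closing each arc by a short segment in a thin annular neighborhood of $\alpha$. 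Two such closing segments can be made disjoint from a fixed one, but in general a pair of them may be forced to cross once (the cyclic orders in which the arcs approach from the two sides need not be compatible). So the honest bound from your construction is $\iota(\gamma_1,\gamma_2)\le k+1$, not $k$.

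The paper deals with exactly this $+1$. For even $k$ it uses a \emph{planar} piece rather than a genus-$(g-1)$ piece: the loops other than $\alpha$ then all lie in the planar subsurface $S_g\smallsetminus\alpha$, so their pairwise geometric intersection numbers are even, and ``at most $k+1$ and even'' gives ``at most $k$''. Your cut surface $\Sigma$ has genus $g-1$, so this parity trick is unavailable there. For odd $k$ the parity trick goes the wrong way, and the paper instead gives a completely different explicit construction (a $(4g+2)$-gon with $k$ parallel chords punctured along their length), where the intersection bound is verified directly by a case analysis. Your outline does not contain an idea that handles odd $k$. If you want to salvage your approach, the cleanest route is to replace $\Sigma$ by a planar surface with two distinguished boundary components, accept the bound $k+1$ on intersections, and then invoke the parity argument; this yields the even case but leaves the odd case open.
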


Together, Theorem \ref{thm: second bound} and Proposition \ref{prop: construction} settle the opening problem for the surface $S_g$ up to a factor of $\sim_k \log g$.
We have made no effort to estimate the functions $C(k)$ implied in our bounds.
While Conjecture \ref{conj: loops} predicts that the bound in Theorem \ref{thm: second bound} is not optimal, the bound in Theorem \ref{thm: third bound} is, by comparison with Proposition \ref{prop: construction}.

In Section \ref{sec: construction}, we prove Proposition \ref{prop: construction 2}, a refinement of Proposition \ref{prop: construction} that indicates some subtlety concerning the parity of $k$.
It also suggests that the genus of a surface $S$ of given Euler characteristic may play a greater role in controlling the cardinality of a $k$-system of loops on $S$ than it does for a $k$-system of arcs on $S$; compare \cite[Theorem 1.2]{abg2017}.
In Section \ref{sec: bound}, we begin with a casual account of the thought process that led to Theorem \ref{thm: third bound}.
We then supply the detailed argument, building to a proof of Theorem \ref{thm: degree bound}, the refinement that we use to prove Theorem \ref{thm: second bound}.


\section*{Acknowledgements.}

I thank Jacob Caudell for helpful conversations.
This work was supported by NSF CAREER Award DMS-1455132.


\section{The construction.}
\label{sec: construction}


In this section, we state and prove a more precise version of Proposition \ref{prop: construction}:

\begin{prop}
\label{prop: construction 2}
There exists a $k$-system $\Gamma$ of simple loops on $S$ of cardinality $\sim_k |\chi|^{k+1}$ if $k$ is even and $S$ has positive genus or if $k$ is odd and $S$ has genus $\ge |\chi| / k$.
Furthermore, in both cases, $\Gamma$ contains a loop that intersects $\sim_k |\chi|^{k+1}$ other loops in $\Gamma$.
\end{prop}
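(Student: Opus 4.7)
The plan is to build $\Gamma$ by hybridizing Przytycki's arc construction with the Malestein--Rivin--Theran trick of closing arcs across a non-separating simple closed curve into loops. The distinguished loop meeting every other member of $\Gamma$ will be the cutting curve itself.

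Since $S$ has positive genus, fix a non-separating simple closed curve $\alpha \subset S$ and let $S''$ denote the surface obtained by cutting $S$ open along $\alpha$, so $\chi(S'') = \chi(S)$ and $S''$ carries two distinguished boundary circles $\alpha_+, \alpha_-$ coming from the two sides of $\alpha$. By \cite[Theorem 1]{przytycki2015}, $S''$ admits a $k$-system of properly embedded simple arcs of cardinality $\sim_k |\chi|^{k+1}$. The strategy is to arrange these arcs so that under the identification $\alpha_+ \cong \alpha_-$ they close up into simple loops on $S$ with a controlled number of crossings of $\alpha$, and then to adjoin $\alpha$ itself to obtain $\Gamma$.

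The parity of $k$ dictates the shape of the closure. When $k$ is even, the cleanest approach uses arcs whose two endpoints both lie on $\alpha_+$: after regluing, each arc closes up to a simple loop that crosses $\alpha$ in exactly two points, and if the boundary endpoints are arranged at distinct positions on $\alpha_+$, the pairwise intersections of the resulting loops are confined to the interior of $S''$ and bounded by $k$. Adjoining $\alpha$, which meets each such loop twice, then yields the required $k$-system of cardinality $\sim_k |\chi|^{k+1}$ (for even $k \ge 2$). When $k$ is odd, each loop should cross $\alpha$ exactly once, forcing the use of arcs with one endpoint on $\alpha_+$ and the other on $\alpha_-$, with matching endpoint positions along $\alpha$. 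Przytycki's construction does not automatically produce a Przytycki-sized subfamily of such ``crossing'' arcs, and this is where the hypothesis $g \ge |\chi|/k$ enters: it allows us to cut $S$ simultaneously along a family of $\sim |\chi|/k$ disjoint non-separating simple closed curves $\alpha = \alpha_1, \ldots, \alpha_r$, enlarging the set of boundary circles across which arcs may run while keeping the Euler characteristic (and hence the Przytycki count) unchanged, so that the appropriate fraction of arcs on the multiply cut surface can be arranged to close up into loops each crossing some $\alpha_i$ exactly once.

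The main obstacle is the endpoint-matching step: one must show that the endpoints of Przytycki's arcs can be placed at prescribed locations on the boundary, perhaps after local sliding or surgery, so that the closure is a simple loop on $S$ without extraneous intersections introduced by the regluing, and so that the pairwise bound $\le k$ is preserved. The odd case is the more delicate one, both because it requires a specific crossing configuration at each $\alpha_i$ and because it is the source of the structural hypothesis on the genus.
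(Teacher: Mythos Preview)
Your outline is missing the mechanism that actually makes the construction work: a \emph{parity argument} that drops the intersection bound from $k+1$ to $k$. In the paper's even case, Przytycki's arcs live on a \emph{planar} surface $P$ with two distinguished boundary components (one endpoint of each arc on each), and they include a special arc $a$ disjoint from all the others. An annulus is glued along those two components to form a genus-one surface $T$, and every arc is closed through the annulus; the special arc becomes a nonseparating loop $\gamma$. The point is that every other loop lies in the planar surface $T\setminus\gamma$, so any two of them have \emph{even} geometric intersection number; since the annulus closure adds at most one extra crossing, the a priori bound $\le k+1$ becomes $\le k$. Your scheme---arcs with both endpoints on $\alpha_+$---does not even produce loops upon regluing $\alpha_+\cong\alpha_-$ (such an arc remains an arc in $S$), and whatever closure you intend, you invoke no planarity or parity to control the result.

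The odd case in the paper is not a cut-and-reglue of Przytycki arcs at all. One starts with a regular $(4g+2)$-gon, draws $k$ chords parallel to a main diagonal, deletes $h+1$ small disks along each chord, and considers the $(2g+1)h^k$ arcs joining midpoints of opposite sides that thread between the holes; identifying opposite sides yields a genus-$g$ surface with $k(h+1)$ boundary components. Loops arising from different side-pairs have \emph{odd} intersection number (fill the boundary and observe a single transverse crossing at the polygon's center) and at most $k+1$ crossings, hence $\le k$; loops from the same side-pair meet in $\le k$ points directly. Setting $g=h$ gives $|\chi|\sim gk$, which is exactly where the hypothesis $g\ge|\chi|/k$ originates---not from needing ``room'' to cut along many curves, as you suggest. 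Your endpoint-matching ``main obstacle'' is indeed unresolved, but the deeper gap is the absence of any parity device: without it you cannot distinguish a $k$-system from a $(k+1)$-system once arcs are closed into loops.
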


The case in which $k$ is even was already indicated in \cite[Remark after Theorem 1.2]{abg2017}.
We supply the implied construction here, as it inspires the case in which $k$ is odd, which is new.
The case in which $k$ is odd hybridizes the constructions of $k$-systems of simple arcs due to Przytycki and 1-systems of simple loops due to Malestein, Rivin, and Theran.

The restriction on the genus in the case that $k$ is odd seems subtle.
For instance, it is necessary when $k = 1$: it follows from \cite[Theorem 1.2]{mrt14} that a 1-system of simple loops on a surface of Euler characteristic $\chi$ and genus $g$ has cardinality $\lesssim_g |\chi|$.
What about the case $k=3$?  
What is the maximum cardinality of a 3-system of simple loops on a surface of genus 1 and Euler characteristic $\chi$?
We only know that it is bounded between $\sim |\chi|^3$, coming from the 2-system in Proposition \ref{prop: construction 2}, and $\sim |\chi|^4 \log |\chi|$, coming from Theorem \ref{thm: second bound}.

\begin{proof}
{\em Case one: $k$ is even.}
Przytycki constructed a $k$-system $A$ of proper, simple arcs on a compact planar surface $P$ of Euler characteristic $\chi$ \cite[Theorem 1.5]{przytycki2015}.
In this construction, $|A| \sim_k |\chi|^k$; there exists an arc $a \in A$ that is disjoint from every other arc in $A$; and $P$ has two distinguished boundary components with the property that each arc in $A$ has one endpoint on each component.
Attach an annulus to $P$ by gluing its boundary components to the distinguished boundary components of $P$ so as to produce a surface $T$ of genus one and Euler characteristic $\chi$.
Join the endpoints of the arc $a$ by an arc $b$ in the annulus, and do the same for every other arc in $A$, subject to the condition that the added arcs are disjoint from $b$.
The result is a set $\Gamma$ of pairwise non-homotopic simple loops on $T$.
It has cardinality $|\Gamma| = |A| \sim_k |\chi|^{k+1}$, and it contains a non-separating simple loop $\gamma = a \cup b$ that is disjoint from every other loop in $\Gamma$.

Consider a pair of loops in $\Gamma$ distinct from $\gamma$.
They are contained in the planar subsurface $T - \gamma$, so they have even intersection number.
On the other hand, they intersect in at most one more point than the corresponding arcs in $A$, hence in at most $k+1$ points.
Thus, if $k$ is even, then any two loops in $\Gamma$ intersect in at most $k$ points.
It follows that $\Gamma$ is a $k$-system of simple loops on $T$ of the desired cardinality.
In order to produce such a $k$-system on an arbitrary surface of Euler characteristic $\chi$ and positive genus, simply attach the necessary number of annuli to $T$.

\noindent
{\em Case two: $k$ is odd.}
Select positive integers $g$ and $h$.
Begin with a regular $(4g+2)$-gon and select a main diagonal of it.
Take $k$ chords of the polygon that are parallel and close to the diagonal.
They cut the polygon into $k-1$ strips and two other polygonal regions.
Remove $h+1$ small open disks along each chord.
Consider an arc that connects the midpoints of opposite sides of the polygon and that crosses each of the $k$ chords exactly once, passing between a pair of the holes on each.
The number of such arcs, up to homotopy rel endpoints, is $(2g+1) \cdot h^k$.
Now glue the opposite sides of the polygon in pairs so as to obtain a surface $S$ of genus $g$ with $k(h+1)$ boundary components.
The arcs close up to $(2g+1) \cdot h^k$ pairwise non-homotopic simple loops on $S$.

We claim that these loops constitute a $k$-system.
First, consider a pair of loops corresponding to arcs joining midpoints of different pairs of opposite sides.
The intersection number between these loops is odd.
One argument for why is by filling the boundary components: doing so does not change the$\pmod 2$ intersection pairing between the loops, and the loops on the filled surface can be isotoped to meet at one transverse point of intersection in the center of the polygon.
In addition, the loops can be homotoped so that there is at most one intersection point between them in each of the $k-1$ strips and in each of the two polygonal regions.
Hence their intersection number is both at most $k+1$ and odd, so in fact it is at most $k$.
Second, consider a pair of loops corresponding to arcs joining midpoints of the same pair of opposite sides.
They can be homotoped so that there is at most one intersection point between them in each of the $k-1$ strips and at most one more point, namely at the midpoint of the identified edge pair.
Hence their intersection number is at most $k$.
(In fact, it is even, so it is at most $k-1$.)

In total, we have constructed a $k$-system of simple loops of cardinality $(2g+1) \cdot h^k$ on a surface of genus $g$ and Euler characteristic $2-2g-k(h+1)$.
Taking $g = h$ gives a $k$-system of cardinality $\sim g^{k+1}$ on a surface of genus $g$ and Euler characteristic $\sim gk$.
Thus, we obtain a $k$-system of cardinality $\sim_k |\chi|^{k+1}$ on a surface of genus $|\chi|/k$ and Euler characteristic $\chi$.
Attaching annuli as before realizes all possible genera $\ge |\chi| / k$.

Finally, we augment the $k$-system by including the core of the first attached annulus in the case $k$ is even and the main diagonal in the case $k$ is odd.
The result is a $k$-system of the required cardinality that contains a loop that intersects every other loop in the $k$-system.
\end{proof}

In fact, it is not hard to see that there exists a function $p: \bZ^+ \to \bR^+$ with the property that a proportion $p(k)$ of the loops in $\Gamma$ have the property that they intersect with a proportion of at least $p(k)$ other loops in $\Gamma$.
Thus, many loops in $\Gamma$ meet $\sim |\chi|^{k+1}$ others.


\section{The upper bound.}
\label{sec: bound}

\subsection{Overview.}
We begin by describing the thought process that led to Theorem 2.

Suppose that $\Gamma$ is a 2-system on $S$, fix a loop $\alpha \in \Gamma$, and let $d$ denote the number of loops in $\Gamma$ that intersect $\alpha$.
We would like to argue that $d \lesssim |\chi|^3$.
To do so, we sort the $d$ loops into three subsets: $d_1$ loops that meet $\alpha$ in a single point, $d_2$ loops that meet $\alpha$ in two points of the same sign, and $d_3$ loops that meet $\alpha$ in two points of opposite signs.
We would like to argue that $d_i \lesssim |\chi|^3$ for $i=1,2,3$ in turn.

The first key idea is that we can bound $d_1$ by a familiar argument:

\begin{lem}
\label{lem: old}
Suppose that $\Gamma$ is a $k$-system of simple loops on $S$ and $\alpha \in \Gamma$.
The number of loops in $\Gamma$ that intersect $\alpha$ exactly once is $\lesssim_k |\chi|^{k+1}$.
\end{lem}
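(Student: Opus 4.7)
The plan is to reduce the statement to Przytycki's bound on $k$-systems of proper simple arcs \cite[Theorem 1]{przytycki2015} by cutting $S$ along $\alpha$. First, if no loop of $\Gamma$ meets $\alpha$ in exactly one point, then the lemma holds trivially; otherwise, $\alpha$ must be non-separating, since the algebraic intersection number of any loop with a separating simple closed curve is zero. Cutting $S$ along $\alpha$ then yields a connected surface $S'$ with $\chi(S') = \chi$ and with two new boundary components $\alpha^+, \alpha^-$, each a copy of $\alpha$.

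Each loop $\beta \in \Gamma$ meeting $\alpha$ in a single transverse point gives rise, under this cut, to a proper simple arc $\beta' \subset S'$ with one endpoint on $\alpha^+$ and one on $\alpha^-$. I would verify that the collection of such arcs constitutes a $k$-system of proper simple arcs on $S'$ in Przytycki's sense. First, any two of them pairwise intersect in at most $k$ points, because the corresponding loops pairwise intersect in at most $k$ points in $S$ and cutting along $\alpha$ can only remove intersection points, never create new ones. Second, any two of them are non-homotopic as proper arcs: a proper homotopy between $\beta'_1$ and $\beta'_2$ in $S'$, which is allowed to slide endpoints along $\alpha^\pm$, descends under regluing $\alpha^+$ to $\alpha^-$ to a free homotopy in $S$ between the loops $\beta_1$ and $\beta_2$, contradicting the no-homotopy condition in the definition of a $k$-system.

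Applying Przytycki's bound to this $k$-system of proper simple arcs on $S'$ yields cardinality $\lesssim_k |\chi(S')|^{k+1} = |\chi|^{k+1}$, which is the required estimate. The only step that requires genuine attention is the preservation of distinct free-homotopy classes under the cut-and-reglue correspondence; the rest is a direct translation of data about loops on $S$ to data about arcs on $S'$, and then an appeal to Przytycki.
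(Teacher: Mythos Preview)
Your overall strategy matches the paper's, but the step you yourself flag as delicate is actually wrong as stated.  It is \emph{not} true that a proper homotopy between $\beta'_1$ and $\beta'_2$ in $S'$ descends to a free homotopy between $\beta_1$ and $\beta_2$ in $S$.  A proper homotopy lets the two endpoints slide along $\alpha^+$ and $\alpha^-$ \emph{independently}; after regluing $\alpha^+$ to $\alpha^-$, the two sliding paths need not agree, and their discrepancy is a winding number around $\alpha$.  Concretely, $\beta$ and $T_\alpha^n \beta$ cut open to properly homotopic arcs for every $n$, yet they are pairwise non-homotopic loops on $S$.  (For a clean instance, cut a closed torus along a meridian $\alpha$: the result is an annulus, in which there is a \emph{single} proper homotopy class of arc from one boundary circle to the other, whereas there are infinitely many homotopy classes of simple loops on the torus meeting $\alpha$ once.)

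The repair is easy and is precisely what the paper's proof of the more general Lemma~\ref{lem: sort of old} does.  If $\beta'_1$ and $\beta'_2$ are properly homotopic in $S'$, then $\beta_2$ is homotopic to $T_\alpha^n \beta_1$ for some integer $n$; since $\iota(\beta_1,\beta_2)\le k$ and $\iota(\beta_1,\alpha)=1$, the power $|n|$ is at most $k$.  Hence at most $k+1$ loops of $\Gamma$ can cut open to any single homotopy class of arc, and Przytycki's bound on the arc system then gives the desired estimate $\lesssim_k |\chi|^{k+1}$, just with an extra factor of $k+1$ absorbed into the implied constant.
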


\noindent
We alluded to this statement in the Introduction, and we establish a more general result in Lemma \ref{lem: sort of old} below.
Thus, $d_1 \lesssim |\chi|^3$, by Lemma \ref{lem: old}.

The second key idea is that we can bound $d_2$ in a similar way, once we pass to an appropriate covering of $S$.
Specifically, form the double cover $\widetilde S \to S$ obtained by cutting $S$ along $\alpha$, taking two copies of the result, and gluing them end on end.
Intrinsically, $\widetilde S$ is the double cover corresponding to the mapping $\pi_1(S) \to H_1(S) \to \bZ / 2 \bZ$, where the second map is$\pmod 2$ intersection pairing with $[\alpha]$.
Under this covering, $\alpha$ lifts to a loop $\widetilde \alpha$, and each of the $d_2$ loops $\beta$ that intersects $\alpha$ in two points of the same sign lifts to a loop $\widetilde \beta$ that intersects $\widetilde \alpha$ in a single point.
Moreover, the lifts of any two such loops meet in at most two points, and no two are homotopic.
Hence we have a 2-system of $d_2$ loops on $\widetilde S$ that each meet $\widetilde \alpha$ in a single point.
Thus, $d_2 \lesssim |\chi(\widetilde S)|^3 \sim |\chi|^3$ by Lemma \ref{lem: old}, noting that $\chi(\widetilde S) = 2 \chi$: the complexity of $\widetilde S$ is bounded by that of $S$.

The third key idea is that we can attempt to bound $d_3$ by passing to a randomly selected covering.
Suppose that $\alpha$ intersects $\beta$  in two points (of either sign), and let $\gamma$ denote a simple loop formed from an arc of $\alpha$ and an arc of $\beta$  between their intersection points.
A non-separating loop lifts to approximately half of the double covers of $S$, which are parametrized by $H^1(S;\bZ/2\bZ)$.
Thus, if we fix a non-separating loop and select a double cover of $S$ uniformly at random, then it has probability about 1/2 of lifting.
Assuming that $\alpha$, $\beta$, and $\gamma$ are non-separating and the events of each lifting to a random double cover are independent events, then with probability about $1/8$, $\alpha$ and $\beta$  will lift, but $\gamma$ will not.
In such a cover, $\alpha$ and $\beta$  have lifts intersecting in a single point.
Thus, by averaging, in a randomly selected double cover to which $\alpha$ lifts to a loop $\widetilde \alpha$, we expect about 1/4 of the $d_2 + d_3$ loops $\beta \in \Gamma$ that intersect $\alpha$ twice to lift a loop that meets $\widetilde \alpha$ once.
Hence there exists at least one double cover $\widetilde S \to S$ in which $\alpha$ lifts to a loop $\widetilde \alpha$ and at least about 1/4 of these $d_2 + d_3$ loops lift to loops hitting $\widetilde \alpha$ once.
We thereby obtain $(1/4)(d_2+d_3) \lesssim |\chi(\widetilde S)|^3$ by applying Lemma \ref{lem: old} to the 2-system of $\sim (1/4)(d_2+d_3)$ lifted loops in $\widetilde S$.
Once more, this leads to the desired bound $d_2 + d_3 \lesssim |\chi|^3$.

However, the attempted argument makes some overly optimistic suppositions: the events of lifting may not be independent, and loops may be separating.
In fact, in the case that $\Gamma$ consists solely of separating loops, the loops of $\Gamma$ lift to every double cover, and no two lifts will intersect in just one point.
Thus, the strategy as articulated breaks down entirely in this case.

The fourth and final key idea is to overcome these obstacles by passing to a more complicated set of coverings guaranteed by a famous theorem of Scott \cite[Theorem 3.3]{scott:1978}.
We arrived at it in the pursuit of coverings to which $\alpha$ and $\beta$ lift but $\gamma$ does not, a special case of the fact that surface groups are LERF.
For a pair of simple loops on $S$ that meet in two or fewer points, there exists an incompressible subsurface $S_0 \subset S$ that contains the two and whose complexity is independent of $S$.
Scott's theorem leads to the existence of a covering $\widetilde S_0 \to S_0$, also of bounded complexity, to which the two loops lift to loops that intersect exactly once (Lemma \ref{lem: subsurface}).
By carefully constructing coverings $\widetilde S \to S$ and counting the ones that extend $\widetilde S_0 \to S_0$, we obtain absolute constants $f, r > 0$ and a set $\cC(S)$ of coverings $\widetilde S \to S$, each with $\le f$ sheets, with the property that any pair of loops on $S$ that meet in at most two points lift to a pair of loops intersecting exactly once in a proportion of $r$ of the coverings in $\cC(S)$ (Theorem \ref{thm: lifting}).
Given $\Gamma$ and $\alpha$, averaging shows that there exists a covering $\widetilde S \to S$ in $\cC(S)$ to which $\alpha$ lifts to a loop $\widetilde \alpha$ and at least $r d / f$ loops in $\Gamma$ lift to loops intersecting $\widetilde \alpha$ exactly once (Theorem \ref{thm: degree bound}).
As before, these loops form a $2$-system on $\widetilde S$ to which we can apply Lemma \ref{lem: old}, leading to the bound $r  d / f \lesssim |\chi(\widetilde S)|^3 \le f^3 |\chi|^3$.
Hence $d \lesssim |\chi|^3$, as desired.


\subsection{The proofs.}
The strategy just described applies directly to $k$-systems of simple loops for all $k \ge 1$.  
We now supply the detailed arguments.
For a pair of simple loops $\alpha$ and $\beta$ on a surface $S$, let $\iota(\alpha,\beta)$ denote their geometric intersection number: this is the fewest number of points in which a loop homotopic to $\alpha$ intersects a loop homotopic to $\beta$.

\begin{lem}
\label{lem: subsurface}
Suppose that $\alpha$ and $\beta$ are simple loops on $S$ in minimal position and $1 \le \iota(\alpha,\beta) \le k$.
For any basing of $\alpha$ and $\beta$ at a point of $\alpha \cap \beta$, there exists a compact, connected, $\pi_1$-injective subsurface $S_0 \subset S$ and a covering $p_0: \widetilde S_0 \to S_0$ such that
\begin{enumerate}
\item
$S_0$ contains $\alpha$ and $\beta$;
\item
each component of $\del S_0$ is either a component of $\del S$ or separates a subsurface of $S$; 
\item
$b_1(S_0) \le 2k+2$ and $t:= |\pi_0(\del S_0)| \le k+2$;
\item
$\alpha$ and $\beta$ lift to $\widetilde S_0$ to based loops intersecting exactly once; and
\item
the number of sheets in the covering is at most $g(k)$, for some function $g: \bZ^+ \to \bZ^+$.
\end{enumerate}
\end{lem}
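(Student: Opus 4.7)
The plan is to construct $S_0$ as a modified regular neighborhood of $\alpha\cup\beta$ and then invoke Scott's LERF theorem on $\pi_1(S_0)$ to produce the covering.

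First, I take $N=N(\alpha\cup\beta)$. Writing $m:=\iota(\alpha,\beta)\le k$, the set $\alpha\cup\beta$ is a $4$-valent graph with $m$ vertices and $2m$ edges, so $N$ deformation retracts onto it, giving $\chi(N)=-m\ge-k$, $b_1(N)\le k+1$, and $|\pi_0(\partial N)|\le k+2$. Modify $N$ into $S_0$ by capping off any null-homotopic boundary component with a disk in $S$, and absorbing any annulus between two parallel boundary components (two of $\partial N$, or one of $\partial N$ with a parallel component of $\partial S$). The resulting $S_0$ is incompressible in $S$, contains $\alpha\cup\beta$, and an Euler-characteristic bookkeeping verifies the stated numerical bounds together with the assertion that each boundary component of $S_0$ either lies on $\partial S$ or separates a subsurface of $S$.

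With $p\in\alpha\cap\beta$ fixed, label the other intersection points $q_1,\dots,q_{m-1}$, and for each $i$ let $\gamma_i\in\pi_1(S_0,p)$ denote the loop obtained by concatenating the arc of $\alpha$ from $p$ to $q_i$ with the reverse of the arc of $\beta$ from $p$ to $q_i$. A direct path-lifting check shows that for the cover $\widetilde S_0\to S_0$ corresponding to a subgroup $H\le\pi_1(S_0,p)$, the based lifts $\widetilde\alpha,\widetilde\beta$ meet only at the basepoint exactly when $\alpha,\beta\in H$ and $\gamma_i\notin H$ for every $i$. A spanning-tree computation in the graph $\alpha\cup\beta$ shows that $\{\alpha,\beta,\gamma_1,\dots,\gamma_{m-1}\}$ is a free basis of $\pi_1(\alpha\cup\beta,p)\cong F_{m+1}$, so in particular $\gamma_i\notin\langle\alpha,\beta\rangle$ there. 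Incompressibility of $S_0$ together with minimal position of $\alpha,\beta$ (which prevents disk-filling relators from identifying $\gamma_i$ with a word in $\alpha,\beta$) then lets me conclude $\gamma_i\notin\langle\alpha,\beta\rangle$ in $\pi_1(S_0)$ as well. Since $\pi_1(S_0)$ is LERF by Scott's theorem, I obtain for each $i$ a finite-index subgroup $H_i\supseteq\langle\alpha,\beta\rangle$ with $\gamma_i\notin H_i$; the intersection $H:=\bigcap_i H_i$ corresponds to the desired cover.

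To bound the number of sheets by some $g(k)$, observe that $\pi_1(S_0)$ is a finitely generated group of bounded complexity (free or surface, of rank at most $2k+2$), and that $\alpha,\beta$ and each $\gamma_i$ are represented by words of length at most $2m\le 2k$ in the canonical generating set coming from the edges of $\alpha\cup\beta$. The algebraic datum $(\pi_1(S_0);\alpha,\beta,\gamma_1,\dots,\gamma_{m-1})$ therefore realizes only finitely many isomorphism types as $k$ is fixed, and taking the maximum over this list of the minimum Scott index furnishes $g(k)$. The principal technical hurdle is the algebraic step $\gamma_i\notin\langle\alpha,\beta\rangle$ in $\pi_1(S_0)$: while transparent in $\pi_1(\alpha\cup\beta)$ via the spanning-tree basis, its persistence under the passage to $\pi_1(S_0)$ is exactly where the minimal position hypothesis plays an essential role, and where I expect to spend the most care.
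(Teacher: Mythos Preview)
Your plan diverges from the paper's proof in two places, and in each place there is a genuine gap.

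\textbf{The subsurface $S_0$.} Capping disks and absorbing parallel annuli yields a $\pi_1$-injective subsurface, but it does \emph{not} force condition (2). For instance, if $\iota(\alpha,\beta)=2$ and $N(\alpha\cup\beta)$ is a twice-punctured torus sitting inside a closed genus-$3$ surface with connected complement, your $S_0$ equals $N$ and neither boundary circle separates $S$. The paper fixes this by a different construction: it first forms $S'=N\cup(\text{all planar complementary regions})$, and then for each remaining complementary piece $S_j'$ it attaches a planar collar $P_j\subset S_j'$ whose outer boundary is a single curve. This forces every component of $\partial S_0$ to bound its own complementary piece, which is exactly how (2) is used later in Theorem~\ref{thm: lifting}. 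Your Euler-characteristic bookkeeping cannot substitute for this; the bounds in (3) are consistent with configurations violating (2).

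\textbf{The covering.} The paper does not separate the $\gamma_i$ one at a time via LERF. Instead it applies Scott's theorem directly to $H=\langle\alpha,\beta\rangle$: there is a finite cover $\widetilde S_0\to S_0$ and a $\pi_1$-injective subsurface $F\subset\widetilde S_0$ with $(p_0)_*\pi_1(F)=H$. Since $H$ has rank two, $b_1(F)=2$; the based lifts $\widetilde\alpha,\widetilde\beta$ are simple loops generating $\pi_1(F)$, so $\iota(\widetilde\alpha,\widetilde\beta)\le 1$, and minimal position (they meet transversely at the lifted basepoint) gives exactly one. This argument never needs the statement $\gamma_i\notin\langle\alpha,\beta\rangle$ at all.

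Your route can be salvaged, but the step you flag as the ``principal technical hurdle'' really is one, and your sketch for it is not a proof. Knowing that $\{\alpha,\beta,\gamma_1,\dots,\gamma_{m-1}\}$ is a free basis of $\pi_1(\alpha\cup\beta)$ says nothing once you pass to $\pi_1(S_0)$: the map $\pi_1(\alpha\cup\beta)\to\pi_1(S_0)$ is not injective whenever a complementary region is a disk (which minimal position allows---it only rules out bigons), and ``incompressibility of $S_0$'' controls $\pi_1(S_0)\hookrightarrow\pi_1(S)$, not this map. In fact the most direct way to prove $\gamma_i\notin\langle\alpha,\beta\rangle$ is to pass to the (infinite) cover corresponding to $\langle\alpha,\beta\rangle$, take a compact core with $b_1=2$, and run exactly the paper's geometric argument above. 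So your approach, once completed, essentially contains the paper's key step and then adds an extra LERF/intersection layer on top of it.
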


Properties (1)-(3) are straightforward to achieve.
The crux of the matter is the theorem of Scott that guarantees property (4).
Property (5) issues easily from (1)-(4).

\begin{proof}
First, form a subsurface $S' \subset S$ by taking a closed, regular neighborhood of $\alpha \cup \beta$ along with every planar component of $S - \alpha \cup \beta$.
Let $S'_1,\dots,S'_t$ denote the closures of the components of $S - S'$.
For each $j=1,\dots,t$, we can locate a planar subsurface $P_j \subset S'_j$ such that $\del P_j$ consists of $\del S' \cap \del S'_j$ and a single loop interior to $S'_j$.
We let $S_0 = S' \cup P_1 \cup \cdots \cup P_t$, and we let $S_j$ denote the closure of $S_j' - P_j$ for $j=1,\dots,t$.
Properties (1) and (2) above are now immediate, and (3) is straightforward to establish.
In particular, the number of homeomorphism types the triple $(S_0,\alpha,\beta)$ may assume is a finite value $h(k)$.

Place a basepoint on $S_0$ at a point of $\alpha \cap \beta$, and let $H \le \pi_1(S_0)$ denote the subgroup generated by the classes of $\alpha$ and $\beta$.
By \cite[Theorem 3.3]{scott:1978}, there exists a finite-sheeted covering $p_0: \widetilde S_0 \to S_0$ and a $\pi_1$-injective subsurface $F \subset \widetilde S_0$ such that $H = (p_0)_* \, \pi_1(F)$.
Since $\alpha$ and $\beta$ are non-isotopic simple loops, $H$ has rank two, and $b_1(F)=2$.
Let $\widetilde \alpha$ and $\widetilde \beta$ denote the lifts of $\alpha$ and $\beta$ to $F$.
They are simple based loops that generate $\pi_1(F)$, so $\iota(\widetilde \alpha, \widetilde \beta) = 1$ or 0 according to whether $F$ has positive genus or not.
Since $\alpha$ and $\beta$ are in minimal position, the same is true of $\widetilde \alpha$ and $\widetilde \beta$; and since $\alpha$ and $\beta$ meet transversely at the basepoint of $S_0$, the lifts $\widetilde \alpha$ and $\widetilde \beta$ meet transversely at the lift of the basepoint to $F$.
It follows that $\widetilde \alpha$ and $\widetilde \beta$ intersect exactly once.
This establishes property (4).

Finally, the choice of covering $p_0: \widetilde S_0 \to S_0$ depends only on the homeomorphism type of the triple $(S_0,\alpha,\beta)$ and not the embedding $S_0 \subset S$.
Since the number of these homeomorphism types is a finite value $h(k)$, the number of sheets in the covering is bounded as well by a finite value $g(k)$.
This establishes (5) and completes the proof.
\end{proof}

The next result promotes the covering produced in Lemma \ref{lem: subsurface} to many coverings of $S$ by parameter counting.
We emphasize our use of terminology: in the absence of a basepoint, we say that a loop $\alpha \subset S$ lifts to a loop $\widetilde \alpha \subset S$ under a covering $p: \widetilde S \to S$ if $\widetilde \alpha$ is a component of $p^{-1}(\alpha)$ and $p$ is one-to-one on $\widetilde \alpha$.

\begin{thm}
\label{thm: lifting}
There exist functions $f: \bZ^+ \to \bZ^+$ and $r : \bZ^+ \to \bR^+$
and a finite set $\cC(S)$ of connected coverings of $S$ with the following properties.
Suppose that $\alpha$ and $\beta$ are loops on $S$ in minimal position and $1 \le \iota(\alpha,\beta) \le k$.
Then $\alpha$ and $\beta$ lift to loops intersecting exactly once
in a proportion of at least $r(k)$ of the coverings in $\cC(S)$.
Furthermore, each covering in $\cC(S)$ has at most $f(k)$ sheets.
\end{thm}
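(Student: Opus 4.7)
The plan is to apply Lemma \ref{lem: subsurface} to the pair $(\alpha,\beta)$ to obtain a subsurface $S_0 \subset S$ and a cover $p_0: \widetilde S_0 \to S_0$ of degree $m_0 \le g(k)$ in which $\alpha$ and $\beta$ lift to based loops meeting exactly once. Any connected cover $p: \widetilde S \to S$ whose restriction $p^{-1}(S_0) \to S_0$ contains a component isomorphic to $\widetilde S_0$ will also have $\alpha$ and $\beta$ lifting to loops meeting exactly once. So I would take $\cC(S)$ to be the set of isomorphism classes of connected coverings of $S$ of a fixed degree $f(k)$ (to be chosen in terms of $g(k)$), and aim to show that the fraction of covers in $\cC(S)$ with the above restriction property is at least some $r(k) > 0$ depending only on $k$.

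For the counting, I would parametrize $\cC(S)$ by transitive homomorphisms $\phi: \pi_1(S) \to S_{f(k)}$ modulo conjugation. The construction in the proof of Lemma \ref{lem: subsurface} exhibits $S$ as $S_0 \cup \bigsqcup_{j=1}^{t} S_j$, where each $S_j$ is a compact surface of positive genus with a single boundary circle $c_j$. Each $\phi$ then amounts to a compatible tuple $(\phi_0,\phi_1,\ldots,\phi_t)$ of homomorphisms on the pieces, with $\phi_0(c_j) = \phi_j(c_j)$ for each $j$. The desired restriction property depends only on $\phi_0$, so conditioning on $\phi_0$ reduces the extension count to $\prod_j N_j(\sigma_j)$, where $\sigma_j := \phi_0(c_j)$ and $N_j(\sigma) := |\{\phi_j \in \Hom(\pi_1(S_j), S_{f(k)}) : \phi_j(c_j) = \sigma\}|$. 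Since $\pi_1(S_j)$ is a free group of rank $2\,\mathrm{genus}(S_j)$ with $c_j$ identified with a product of $\mathrm{genus}(S_j)$ commutators, the classical Frobenius formula yields $N_j(\sigma) = |S_{f(k)}|^{2\,\mathrm{genus}(S_j)-1} \cdot \sum_{\chi \in \widehat{S_{f(k)}}} \chi(\sigma)/\chi(1)^{2\,\mathrm{genus}(S_j)-1}$, whose ratio with the unconstrained total $|S_{f(k)}|^{2\,\mathrm{genus}(S_j)}$ is bounded below by a positive quantity depending only on $|S_{f(k)}|$, uniformly in $\mathrm{genus}(S_j) \ge 1$ for $\sigma$ in the commutator subgroup $A_{f(k)}$. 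Multiplying over the bounded number $t \le k+2$ of pieces and dividing by the total count of homomorphisms produces the bound $r(k) > 0$.

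The main obstacle is ensuring that $N_j(\sigma_j) > 0$: extension requires $\sigma_j \in A_{f(k)}$, i.e., the image of $c_j$ under $\phi_0$ must lie in the commutator subgroup. The surface relation gives $\prod_j \sigma_j \in A_{f(k)}$ automatically, but individual memberships do not. I would address this by replacing $p_0$ with the cover associated to the diagonal homomorphism $\pi_1(S_0) \to S_{m_0} \times \bZ/2$, whose second factor is $\mathrm{sgn}$ composed with the homomorphism inducing $p_0$, composed with a suitable embedding $S_{m_0} \times \bZ/2 \hookrightarrow S_{2m_0}$. This at most doubles the degree, so I would take $f(k) := 2g(k)$. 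The replacement preserves the property that $\alpha$ and $\beta$ lift to intersecting-once loops, and it forces all boundary values of the new $\phi_0$ into the alternating group, restoring extendability uniformly. A minor additional check is that transitivity of $\phi$ (needed for the covering to be connected) is generic, so the counting estimate survives the restriction to connected covers.
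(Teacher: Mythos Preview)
Your strategy is the same as the paper's: apply Lemma~\ref{lem: subsurface}, encode $p_0$ as a homomorphism $\phi_0$ from $\pi_1(S_0)$ into a finite group, extend $\phi_0$ over each complementary piece $S_j$, and count extensions against the total to get the proportion $r(k)$. The execution differs in three places. First, the paper avoids the Frobenius formula entirely: on each $S_j$ it freely assigns the images of all but one or two of the standard generators and then solves the single relation for the rest, giving $\ge |G_k|^{\,b_1(S_j)-2}$ extensions per piece; multiplying over $t\le k+2$ pieces and dividing by the crude upper bound $|G_k|^{\,b_1(S)}$ on the total yields $r(k)$ immediately. Second, for the parity obstruction the paper passes through the normal core of $(p_0)_*\pi_1(\widetilde S_0)$ to get a quotient of order $\le g(k)!$, embeds that quotient into $S_{g(k)!}$ by Cayley, and then diagonally into the alternating group $G_k$ on $2g(k)!$ letters, where every element is a commutator; your $S_{m_0}\times\bZ/2$ fix is in the same spirit, but note that $m_0$ varies with $(\alpha,\beta)$, so you must first embed into a group depending only on $k$ before defining $\cC(S)$. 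Third, the paper takes $\cC(S)$ to be the covers arising from all pairs $(\phi:\pi_1(S)\to G_k,\ J\le G_k)$ rather than connected covers of a fixed degree, which sidesteps your transitivity check and also your assumption that each $S_j$ has a single boundary circle (true only when $S$ is closed; in general $S_j$ can meet $\partial S$, and the paper handles that case separately since the extra boundary generator makes the extension unconstrained).
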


We have made no effort to control the functions $f$ and $r$.

\begin{proof}
First, select a subsurface $S_0 \subset S$, place a basepoint at a point of $\alpha \cap \beta$, and construct a covering $p_0: \widetilde S_0 \to S_0$ as in Lemma \ref{lem: subsurface}.

\noindent
{\em From coverings to subgroups.}
We translate the covering $p_0 : \widetilde S_0 \to S_0$ into the data of a map $\phi : \pi_1(S_0) \to G_k$ and a subgroup $J \le G_k$, where $G_k$ denotes a finite group that depends only on $k$.
Corresponding to $p_0: \widetilde S_0 \to S_0$ is the subgroup $H = (p_0)_* \pi_1(\widetilde S_0) \le \pi_1(S_0)$ of index $\le g = g(k)$.
The intersection of the finitely many conjugates of $H$ in $\pi_1(S_0)$ is a normal subgroup $N$ of finite index.
In fact, its index is at most $g!$, by a theorem of Poincar\'e.
Thus, the quotient group $\pi_1(S_0)/N$ is a finite group of order at most $g!$.
Under the quotient map $\pi_1(S_0) \to \pi_1(S_0)/N$, $H$ is the preimage of the subgroup $H/N$.
We simultaneously embed all groups of order $\le g!$ into the symmetric group $K$ on $g!$ letters.
We then embed $K$ as the diagonal subgroup $K \times K$ in the alternating group on $2 g!$ letters.
This is the group $G_k$; the reason we take it to be an alternating group is explained when we construct extensions below.
Composing the quotient map with the embedding yields a map $\phi: \pi_1(S_0) \to G_k$ and a subgroup $J \le G_k$ with the property that $H = \phi^{-1}(J)$.

\noindent
{\em Promoting coverings.}
If we extend $\phi_0 : \pi_1(S_0) \to G_k$ to a map $\phi: \pi_1(S) \to G_k$, then the preimage of $J$ under $\phi$ intersects the subgroup $\pi_1(S_0) \le \pi_1(S)$ precisely in $H$.
Therefore, the covering $p: \widetilde S \to S$ corresponding to the subgroup $\phi^{-1} (J)$ restricts to a (possibly disconnected) covering of $S_0$, and the component of this covering that contains the lift of the basepoint is the covering $p_0: \widetilde S_0 \to S_0$.
In particular, $\alpha$ and $\beta$ lift to simple loops intersecting exactly once on $\widetilde S$.

\noindent
{\em Constructing extensions.}
We now take up the problem of constructing such extensions.
We use the notation introduced in the proof of Lemma \ref{lem: subsurface}.
Fix a component $S_j$ of the closure of $S - S_0$.
By Lemma \ref{lem: subsurface} part (2), it intersects $S_0$ in a single simple loop that is a boundary component of each.
Let $c$ denote a based loop in $S_0$ freely homotopic to this loop.
Let $h$ denote the genus and $m$ the number of boundary components of $S_j$, and place a basepoint on $S_j$.
We can find based loops $a_1,b_1,\dots,a_h,b_h,c_1,\dots,c_m$ in $S_j$ such that
\begin{itemize}
\item
$a_1,b_1,\dots,a_h,b_h$ are freely homotopic to a geometric symplectic basis of $S_j$;
\item
$c_1,\dots,c_m$ are freely homotopic to the boundary components and punctures of $S_j$;
\item
$c_m$ is freely homotopic to the same boundary component as $c$; and
\item
$\pi_1(S_j) = \langle a_1,b_1,\dots,a_h,b_h,c_1,\dots,c_m \, | \, \prod_{i=1}^h [a_i,b_i] \cdot \prod_{j=1}^{m-1} c_j = c_m \rangle$.
\end{itemize}
By the Seifert - van Kampen theorem, $\pi_1(S_0 \cup S_j)$ is isomorphic to the free product of $\pi_1(S_0)$ and $\pi_1(S_j)$ subject to the single relation $c = c_m$.

Thus, in order to extend $\phi_0$ from $\pi_1(S_0)$ to $\pi_1(S_0 \cup S_j)$, we must specify a value $\phi (g) \in G_k$ for each generator $g \in \{ a_1,b_1,\dots,a_h,b_h,c_1,\dots,c_{m-1} \}$ subject to the single relation
\begin{equation}
\label{eq: relation}
\prod_{i=1}^h [\phi(a_i),\phi(b_i)] \cdot \prod_{j=1}^{m-1} \phi (c_j) = \phi_0(c).
\end{equation}
Observe that if $m > 1$, then we can specify arbitrary values for each $g \ne c_{m-1}$ and then fix the value of $\phi(c_{m-1})$ to ensure that \eqref{eq: relation} holds.
If instead $m=1$, then $h \ge 1$, since $S_0$ is $\pi_1$-injective, and we can still specify arbitrary values for each $g \ne a_h, b_h$.
Then we must select $\phi(a_h)$ and $\phi(b_h)$ so that their commutator equals $( \prod_{i=1}^{h-1} [\phi(a_i),\phi(b_i)] )^{-1} \cdot \phi_0(c)$: this is possible to do, since every element of the alternating group $G_k$ is a commutator, by \cite[Theorem 1]{miller:1899} (assuming, by a slight enlargement if need be, that $G_k$ is an alternating group on $n \ge 5$ letters).
We carry out this procedure for each component $S_j$, $j=1,\dots,t$, and in this way we obtain an extension $\phi : \pi_1(S) \to G_k$.

\noindent
{\em Counting extensions.}
Now we can estimate how many such extensions there are of $\phi_0$.
An extension is determined by its values on $b_1(S)$ generators of $\pi_1(S)$, which we may take to be a subset of the $b_1(S_0) \le 2k+2$ generators of $\pi_1(S_0)$ and the $b_1(S_j)$ generators of $\pi_1(S_j)$ for each $j = 1,\dots,t \le k+2$.
The initial map $\phi_0$ is determined by its values on the generators of $\pi_1(S_0)$, and the extension is freely specified on all but at most two generators of each $\pi_1(S_j)$.
Hence the number of constrained parameters is at most $(2k+2) + (k+2) = 3k+4$.
The number of extensions of $\phi_0$ to $\phi: \pi_1(S) \to G_k$ is consequently at least $f(k)^{b_1(S) - 3k - 4}$, where $f(k) := |G_k|$.

\noindent
{\em The set of coverings.}
We let $\cC(S)$ denote the set of coverings of $S$ which correspond to the preimage of a subgroup $J \le G_k$ under a homomorphism $\phi: \pi_1(S) \to G_k$.
Note that $\cC(S)$ is independent of the choice of basepoint on $S$, and each covering in $\cC(S)$ contains at most $f(k)$ sheets.
The cardinality of $\cC(S)$ is bounded above by the number of homomorphisms $\phi: \pi_1(S) \to G_k$ times the number of subgroups of $G_k$.
By parameter counting, the number of homomorphisms is bounded above by $f(k)^{b_1(S)}$, while the number of subgroups is a function $s(k)$ of $k$ alone.
Hence $|\cC(S)| \le f(k)^{b_1(S)} s(k)$.

\noindent
{\em Final\'e.}
We have shown that $\ge f(k)^{b_1(S) - 3k - 4}$ coverings in $\cC(S)$ have the property that $\alpha$ and $\beta$ lift to loops intersecting exactly once.
Hence the proportion of coverings in $\cC(S)$ with this property is at least $r(k) := f(k)^{-3k-4} s(k)^{-1}$.

\end{proof}

Given an arbitrary subset of loops $\Gamma_0 \subset \Gamma$, we define two more subsets of $\Gamma$ related to $\Gamma_0$:
\begin{itemize}
\item
$A(\Gamma_0)$, the subset of loops in $\Gamma$ with a single intersection point with the loops in $\Gamma_0$;
\item
$U(\Gamma_0)$, the subset of loops in $\Gamma$ that intersect a unique loop in $\Gamma_0$.
\end{itemize}
Thus, $A(\Gamma_0) \subset U(\Gamma_0)$.
Note that if $\Gamma$ is a 1-system, then $A(\Gamma_0) = U(\Gamma_0)$ for any $\Gamma_0 \subset \Gamma$.
This observation serves to explain why the distinction between these two subsets did not arise in \cite{greene:1-system}.

The next result is the promised refinement of Lemma \ref{lem: old}.
It does not involve any of the preparation developed so far.
Rather, its proof adapts a familiar argument that first appeared in the proof of \cite[Theorem 1.4]{przytycki2015}.

\begin{lem}
\label{lem: sort of old}
Suppose that $\Gamma$ is a $k$-system on $S$ and $\Gamma_0 \subset \Gamma$.
Then $|A(\Gamma_0)| \lesssim_k |\chi|^{k+1}$.
\end{lem}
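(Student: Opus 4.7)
The plan is to adapt Przytycki's cutting argument from \cite[Theorem 1.4]{przytycki2015} exactly as in the proof of Lemma \ref{lem: old}, but replacing the single loop $\alpha$ with the entire $1$-complex $G = \bigcup_{\alpha \in \Gamma_0}\alpha$.  First, place $\Gamma_0$ in minimal position, so that $G$ is a generically $4$-valent graph on $S$, and form the cut surface $S'$ by removing a small open regular neighborhood $N(G)$.  Since each $\beta \in A(\Gamma_0)$ meets $G$ in exactly one point, cutting turns $\beta$ into a properly embedded simple arc $\beta' \subset S'$ whose two endpoints lie on $\partial S'$.

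Second, verify that the family $\{\beta' : \beta \in A(\Gamma_0)\}$ is a $k$-system of proper arcs on $S'$.  Pairwise intersections of at most $k$ points are inherited from $\Gamma$, and two distinct arcs $\beta_1'$ and $\beta_2'$ are non-isotopic as proper arcs because an isotopy between them (through arcs with endpoints on $\partial S'$) would descend, under the inverse gluing, to a free homotopy between $\beta_1$ and $\beta_2$ on $S$, contradicting the $k$-system condition on $\Gamma$.  Apply Przytycki's bound on proper arcs \cite[Theorem 1]{przytycki2015} to each component $S_i'$ of $S'$ and sum to obtain $|A(\Gamma_0)| \lesssim_k \sum_i |\chi(S_i')|^{k+1}$.

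The principal obstacle is converting this sum into a bound of the form $|\chi(S)|^{k+1}$.  A direct computation gives $\chi(S') = \chi(S) - \chi(G) = \chi(S) + V$, where $V$ counts the pairwise intersections of loops in $\Gamma_0$, and $V$ can far exceed $|\chi(S)|$.  When that occurs, many components of $S'$ are disks or annuli which may host arcs but to which Przytycki's estimate does not apply cleanly.  I would handle these small-Euler-characteristic components by a direct combinatorial count, bounding the arcs they contain in terms of the number of $\partial N(G)$-arcs appearing in their boundaries (using the $k$-system property of $\Gamma$ to constrain how many arcs can share such endpoints), and combine this with Przytycki's bound on components of Euler characteristic at most $-1$.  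Careful bookkeeping should show that the total contribution is $\lesssim_k |\chi(S)|^{k+1}$, as claimed.
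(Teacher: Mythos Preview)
Your approach has two genuine gaps, and both trace back to a missing idea that the paper's proof supplies.

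\textbf{The isotopy claim is false.} You assert that if $\beta_1'$ and $\beta_2'$ are isotopic as proper arcs in $S'$, then $\beta_1$ and $\beta_2$ are freely homotopic in $S$. This fails already when $\Gamma_0 = \{\alpha\}$ is a single loop: if $\beta_1$ and $\beta_2$ each cross $\alpha$ once and differ by a power of the Dehn twist along $\alpha$, they are non-homotopic in $S$, yet the arcs they produce on $S \setminus N(\alpha)$ are properly isotopic (the isotopy slides an endpoint around a boundary circle). The paper confronts exactly this phenomenon and handles it by observing that at most $k+1$ loops in $A(\Gamma_0)$ can cut to the same arc type, since the twist parameter is bounded by $k$. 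You would need an analogous argument, and with the full graph $G$ in place of a single curve the ambiguity is governed by the combinatorics of $\partial N(G)$, which you have not controlled.

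\textbf{The Euler-characteristic blowup is fatal, not a bookkeeping issue.} As you note, $|\chi(S')| = |\chi(S)| + V$, with $V$ unbounded in terms of $|\chi(S)|$. You propose to treat the many disk and annulus components by a direct count. But in a disk every proper arc is isotopic to every other, so your arc system collapses there; you learn nothing about how many distinct $\beta$'s landed in that disk until you repair the isotopy gap above, and that repair would have to absorb quantities on the order of $V$, not $|\chi(S)|$. ``Careful bookkeeping'' is not a substitute for a mechanism that keeps the complexity of the cut surface comparable to $|\chi(S)|$.

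The idea you are missing is to \emph{resolve the crossings} of the $1$-complex $\bigcup_{\alpha \in \Gamma_0} \alpha$ before cutting, replacing it (componentwise) by a disjoint union $\Gamma_1$ of simple closed curves. Each $\beta \in A(\Gamma_0)$ still meets $\Gamma_1$ in a single transverse point, since the resolution is local and away from that point; and now cutting along $\Gamma_1$ produces a surface with the \emph{same} Euler characteristic as $S$. The Dehn-twist ambiguity is then the simple one-curve case just described, bounded by $k+1$, and Przytycki's arc bound applies directly to give $|A(\Gamma_0)| \lesssim_k |\chi|^{k+1}$.
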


\begin{proof}
The union of the loops in $\Gamma_0$ is a 1-complex in $S$.
Choose a component of it and resolve its intersection points in such a way that the result is connected.
Let $\Gamma_1$ denote the union of the resolved components that intersect a loop in $A(\Gamma_0)$.
Thus, $\Gamma_1$ is a collection of pairwise disjoint, simple loops on $S$; moreover, for each loop in $\Gamma_1$, there is a loop in $A(\Gamma_0)$ that meets it in a single point and which is otherwise disjoint from $\Gamma_1$.
Therefore, cutting $S$ along $\Gamma_1$ results in a connected surface with the same Euler characteristic as $S$.
Moreover, each loop of $A(\Gamma_0)$ cuts open to an arc, and any two of the arcs intersect at most $k$ times.
Any two loops in $A(\Gamma_0)$ cutting open to homotopic arcs on the cut surface must intersect the same component $\alpha$ of $\Gamma_1$ and differ by a power of Dehn twist along $\alpha$.
This power is at most $k$, since the loops intersect in at most $k$ points.
Therefore, at most $k+1$ loops in $A(\Gamma_0)$ can cut open to the same homotopy type of arc on the cut surface.
Consequently, the number of loops in $A(\Gamma_0)$ is at most $k+1$ times the size of a $k$-system of arcs on the cut surface, which is $\lesssim_k |\chi|^{k+1}$ by \cite[Theorem 1.5]{przytycki2015}.
\end{proof}

Next, we apply Theorem \ref{thm: lifting} in order to promote Lemma \ref{lem: sort of old} to a much stronger result:

\begin{thm}
\label{thm: degree bound}
Suppose that $\Gamma$ is a $k$-system on $S$ and $\Gamma_0 \subset \Gamma$.
Then $|U(\Gamma_0)| \lesssim_k |\chi|^{k+1}$.
\end{thm}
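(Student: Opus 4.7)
The plan is to pass to a finite cover of $S$ provided by Theorem \ref{thm: lifting} and apply Lemma \ref{lem: sort of old} there. For each $\beta \in U(\Gamma_0)$, write $\alpha_\beta \in \Gamma_0$ for the unique loop of $\Gamma_0$ that meets $\beta$. I aim to build, in some covering $\widetilde S \to S$, a $k$-system $\widetilde \Gamma$ on $\widetilde S$ and a subset $\widetilde \Gamma_0 \subset \widetilde \Gamma$ such that a positive fraction of the loops in $U(\Gamma_0)$ have lifts lying in $A(\widetilde \Gamma_0)$; then Lemma \ref{lem: sort of old} will yield the bound.

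I would first average over $\cC(S)$. For each $\beta \in U(\Gamma_0)$ we have $1 \le \iota(\alpha_\beta,\beta) \le k$, so Theorem \ref{thm: lifting} ensures that a proportion $\ge r(k)$ of the coverings in $\cC(S)$ admit lifts of $\alpha_\beta$ and $\beta$ meeting exactly once. Double counting produces a single covering $p : \widetilde S \to S$ in $\cC(S)$, of at most $f(k)$ sheets, together with a subset $U' \subseteq U(\Gamma_0)$ of size $\ge r(k) |U(\Gamma_0)|$, such that for every $\beta \in U'$ there exist lifts $\widetilde \alpha_\beta$ of $\alpha_\beta$ and $\widetilde \beta$ of $\beta$ meeting in a single point of $\widetilde S$.

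The step I expect to be the main obstacle is that the chosen lift $\widetilde \beta$ may cross several distinct lifts of $\alpha_\beta$ in $\widetilde S$, since $\beta$ can meet $\alpha_\beta$ in as many as $k$ points in $S$. If I were to set $\widetilde \Gamma_0$ equal to all lifts of loops in $\Gamma_0$, a given $\widetilde \beta$ might then contribute up to $k$ intersection points with $\widetilde \Gamma_0$ rather than the single intersection that Lemma \ref{lem: sort of old} requires. I would dispatch this via a second layer of averaging: for each $\alpha \in \Gamma_0$ arising as some $\alpha_\beta$, pick a single ``star'' lift $\widetilde \alpha_\star$ uniformly at random among the at most $f(k)$ lifts of $\alpha$ to $\widetilde S$, independently over the $\alpha$'s. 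For each $\beta \in U'$ the matching event $\{\widetilde \alpha_\beta = \widetilde \alpha_{\alpha_\beta,\star}\}$ has probability at least $1/f(k)$, so by linearity of expectation there is a deterministic choice of stars for which the matched subset $U'' \subseteq U'$ has size $\ge |U'|/f(k)$.

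Finally I would take $\widetilde \Gamma_0 := \{\widetilde \alpha_\star\}$ and $\widetilde \Gamma := \widetilde \Gamma_0 \cup \{\widetilde \beta : \beta \in U''\}$. A routine verification shows that $\widetilde \Gamma$ is a $k$-system on $\widetilde S$: distinct loops in $\Gamma$ have non-freely-homotopic lifts (free homotopy descends under $p$), and intersection points in the cover inject into intersection points below. For $\beta \in U''$, the loop $\widetilde \beta$ meets $\widetilde \Gamma_0$ in exactly one point---namely $\widetilde \alpha_{\alpha_\beta,\star}$---because for $\alpha \ne \alpha_\beta$ the loops $\beta$ and $\alpha$ are already disjoint in $S$, and $\widetilde \Gamma_0$ contains no other lift of $\alpha_\beta$. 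Lemma \ref{lem: sort of old} then gives
\[
|U''| \le |A(\widetilde \Gamma_0)| \lesssim_k |\chi(\widetilde S)|^{k+1} \le f(k)^{k+1} |\chi|^{k+1},
\]
from which $|U(\Gamma_0)| \lesssim_k |\chi|^{k+1}$ follows after absorbing $r(k)$ and $f(k)$ into the implied constant.
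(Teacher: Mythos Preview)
Your proof is correct and follows essentially the same approach as the paper's. Both arguments average over the family $\cC(S)$ from Theorem~\ref{thm: lifting} to find one cover where a positive proportion of pairs lift to loops meeting once, then pigeonhole on the at most $f(k)$ lifts of each $\alpha$ to synchronise the chosen lifts (your ``second layer of averaging'' is exactly the paper's passage from $P_0$ to $P_1$), and finally invoke Lemma~\ref{lem: sort of old} in the cover. The one cosmetic difference is that the paper first resolves the $1$-complex $\bigcup\Gamma_0$ into a family $\Gamma_1$ of pairwise disjoint simple loops before lifting, whereas you work directly with the loops $\alpha_\beta\in\Gamma_0$; since Lemma~\ref{lem: sort of old} does not require the loops of the distinguished subset to be disjoint, your shortcut is legitimate.
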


\noindent
This is the stated refinement of Theorem \ref{thm: third bound}, which corresponds to the case in which $\Gamma_0$ consists of a single loop.
This case is really at the heart of the following proof.
The general case presented requires just a little extra dressing.

\begin{proof}
As in the proof of Lemma \ref{lem: sort of old}, the union of the loops in $\Gamma_0$ is a 1-complex in $S$, and we resolve its intersection points to produce one simple loop for each of its components.
Let $\Gamma_1$ denote the union of the resolved components that essentially intersect some loop in $U(\Gamma_0)$.
Thus, $\Gamma_1$ is a collection of pairwise disjoint, simple loops on $S$ with the property that each loop in $U(\Gamma_0)$ intersects a unique loop in $\Gamma_1$, and it does so in $\le k$ points.

For each loop $\beta \in U(\Gamma_0)$, there exists a unique loop $\alpha \in \Gamma_1$ that it essentially intersects.
Let $P$ denote the set of such pairs $(\alpha,\beta) \in \Gamma_1 \times U(\Gamma_0)$.
Apply Theorem \ref{thm: lifting}.
For each pair $(\alpha,\beta) \in P$, a proportion of at least $r(k)$ of the coverings in $\cC(S)$ have the property that $\alpha$ and $\beta$ lift to simple loops $\widetilde \alpha$, $\widetilde \beta$ with $\iota(\widetilde \alpha, \widetilde \beta) = 1$.
Therefore, by averaging, there exists a covering $p: \widetilde S \to S$ in $\cC(S)$ with $\le f(k)$ sheets and a subset $P_0 \subset P$ of cardinality $|P_0| \ge r(k) \cdot |P| = r(k) \cdot |U(\Gamma_0)|$ with the property that for all $(\alpha,\beta) \in P_0$, $\alpha$ and $\beta$ lift to loops $\widetilde \alpha$, $\widetilde \beta$ with $\iota(\widetilde \alpha, \widetilde \beta) = 1$.

Note that a given loop $\alpha$ may occur in several pairs in $P_0$, and the associated lifts of it may be different.
However, $p^{-1}(\alpha)$ contains at most $f(k)$ components, so for a proportion of at least $1/f(k)$ of the pairs in $P_0$ with first coordinate $\alpha$, the lifts $\widetilde \alpha$ are the same.
Thus, we may pass to a subset $P_1 \subset P_0$ of cardinality $|P_1| \ge |P_0| / f(k)$ with the additional property that for any two pairs $(\alpha,\beta_1)$, $(\alpha,\beta_2) \in P_1$, the associated lifts of $\alpha$ are the same.

Let $\widetilde \Gamma_1$ denote the set of such lifts $\widetilde \alpha$ and $\widetilde U(\Gamma_0)$ the set of such lifts $\widetilde \beta$, where $(\alpha,\beta) \in P_1$.
We have $|\widetilde U(\Gamma_0)| = |P_1|$.
No two of the loops in $\widetilde \Gamma_1 \cup \widetilde U(\Gamma_0)$ are homotopic or intersect in more than $k$ points.
Therefore, this set is a $k$-system of simple loops on $\widetilde S$.
The loops of $\widetilde \Gamma_1$ are pairwise disjoint, and each loop in $\widetilde U(\Gamma_0)$ intersects a single loop in $\widetilde \Gamma_1$, which it does so in a unique point.
Thus, $\widetilde U (\Gamma_0) = A (\widetilde \Gamma_1)$, and Lemma \ref{lem: sort of old} yields $|A(\widetilde \Gamma_1)| \lesssim_k |\chi(\widetilde S)|^{k+1}$.
Since $r(k) \cdot |U(\Gamma_0)| / f(k) \le | A(\widetilde \Gamma_1)|$ and $|\chi(\widetilde S)| \le f(k) \cdot |\chi|$, it follows that $|U(\Gamma_0)| \lesssim_k |\chi|^{k+1}$.
\end{proof}

At last, we establish our main result.

\begin{proof}[Proof of Theorem \ref{thm: second bound}]
If $\Gamma$ contains a loop disjoint from the rest, then the result follows by an easy induction on the genus.
If not, then we apply \cite[Theorem 3]{greene:1-system} to find a subset $\Gamma_0 \subset \Gamma$ with the property that $|U(\Gamma_0)| \gtrsim_k |\Gamma| / \log g$ and then apply Theorem \ref{thm: degree bound} to it.
\end{proof}

\bibliographystyle{myalpha}
\bibliography{/Users/JoshuaGreene/Dropbox/Papers/References}

\providecommand{\bysame}{\leavevmode\hbox to3em{\hrulefill}\thinspace}
\providecommand{\MR}{\relax\ifhmode\unskip\space\fi MR }
\providecommand{\MRhref}[2]{%
  \href{http://www.ams.org/mathscinet-getitem?mr=#1}{#2}
}
\providecommand{\href}[2]{#2}
\begin{thebibliography}{MRT14}

\bibitem[ABG17]{abg2017}
Tarik Aougab, Ian Biringer, and Jonah Gaster, \emph{Packing curves on surfaces
  with few intersections}, International Mathematics Research Notices (2017),
  rnx270.

\bibitem[Aou14]{aougab14}
Tarik Aougab, \emph{Constructing large {$k$}-systems on surfaces}, Topology
  Appl. \textbf{176} (2014), 1--9.

\bibitem[Gre18]{greene:1-system}
Joshua~Evan Greene, \emph{On curves intersecting at most once}, {\tt
  arxiv:1807.05658} (2018).

\bibitem[JMM96]{jmm96}
M.~Juvan, A.~Malni{\v c}, and B.~Mohar, \emph{Systems of curves on surfaces},
  J. Combin. Theory Ser. B \textbf{68} (1996), no.~1, 7--22.

\bibitem[Mil99]{miller:1899}
G.~A. Miller, \emph{On the commutators of a given group}, Bull. Amer. Math.
  Soc. \textbf{6} (1899), no.~3, 105--109.

\bibitem[MRT14]{mrt14}
Justin Malestein, Igor Rivin, and Louis Theran, \emph{Topological designs},
  Geom. Dedicata \textbf{168} (2014), 221--233.

\bibitem[Prz15]{przytycki2015}
Piotr Przytycki, \emph{Arcs intersecting at most once}, Geom. Funct. Anal.
  \textbf{25} (2015), no.~2, 658--670.

\bibitem[Sco78]{scott:1978}
Peter Scott, \emph{Subgroups of surface groups are almost geometric}, J. London
  Math. Soc. (2) \textbf{17} (1978), no.~3, 555--565.

\end{thebibliography}

\end{document}